\renewcommand*{\backref}[1]{}
\renewcommand*{\backrefalt}[4]
{
    \ifcase #1
        No citation in the text.
    \or
        Cited on Page #2.
    \else
        Cited on Pages #2.
    \fi
}
\def\@tocline#1#2#3#4#5#6#7{\relax
  \ifnum #1>\c@tocdepth 
  \else
    \par \addpenalty\@secpenalty\addvspace{#2}%
    \begingroup \hyphenpenalty\@M
    \@ifempty{#4}{%
      \@tempdima\csname r@tocindent\number#1\endcsname\relax
    }{%
      \@tempdima#4\relax
    }%
    \parindent\z@ \leftskip#3\relax \advance\leftskip\@tempdima\relax
    \rightskip\@pnumwidth plus4em \parfillskip-\@pnumwidth
    #5\leavevmode\hskip-\@tempdima
      \ifcase #1
       \or\or \hskip 1em \or \hskip 2em \else \hskip 3em \fi%
      #6\nobreak\relax
    \dotfill\hbox to\@pnumwidth{\@tocpagenum{#7}}\par
    \nobreak
    \endgroup
  \fi}
\newtheorem{thm}{Theorem}[section]
\newtheorem{corollary}[thm]{Corollary}
\newtheorem{prop}[thm]{Proposition}
\newtheorem{conjecture}[thm]{Conjecture}
    \newtheoremstyle{TheoremNum}
        {8.0pt plus 2.0pt minus 4.0pt}{8.0pt plus 2.0pt minus 4.0pt} 
        {\itshape} 
        {-0.15cm} 
        {\bfseries} 
        {.} 
        { }  
        {\thmname{#1}\thmnote{ \bfseries #3}}
    \theoremstyle{TheoremNum}
    \newtheorem{duplicate}{}
\theoremstyle{definition}
\newtheorem{defn}[thm]{Definition}
\newcommand*{\claimproofname}{My proof}
\newenvironment{claimproof}[1][\claimproofname]{\begin{proof}[#1]}{\end{proof}}
\DeclareMathOperator{\diam}{diam}
\newcommand{\calm}{{\mathcal{M}}}
\newcommand{\calx}{{\mathcal{X}}}
\newcommand{\NN}{\mathbb{N}}
\newcommand{\RR}{\mathbb{R}}
\newcommand{\QQ}{\mathbb{Q}}
\tikzstyle{blackNode}=[fill=black, draw=black, shape=circle]
\newcounter{commentcounter}
\newtheorem*{proposition*}{Proposition}
\newtheorem*{definition*}{Definition}
\newtheorem*{notation*}{Notation}
\newtheorem*{lemma*}{Lemma}
\newtheorem*{claim*}{Claim}
\newcommand{\mc}[1]{\mathcal{#1}}
\def\N{{\mathbb N}}
\def\T{{\mathcal T}}
\newcommand{\length}{\textrm{length}}
\newcommand{\cone}{\textrm{Cone}_\omega}
\newcommand{\QREG}{\QQ\mathbf{REG}}
\DeclarePairedDelimiter\absval{\lvert}{\rvert}
\title{Regularity of quasigeodesics characterises hyperbolicity}
\author{Sam Hughes}
\email{sam.hughes.maths@gmail.com}\email{hughes@math.uni-bonn.de}
\author{Patrick S. Nairne}
\email{nairne@maths.ox.ac.uk}
\author{Davide Spriano}
\email{spriano@maths.ox.ac.uk}
\address{Mathematical Institute, Andrew Wiles Building, Observatory Quarter, University of Oxford, Oxford OX2 6GG, UK}
\date{\today}
\begin{document}

\begin{abstract}
We characterise hyperbolic groups in terms of quasigeodesics in the Cayley graph forming regular languages.  We also obtain a quantitative characterisation of hyperbolicity of geodesic metric spaces by the non-existence of certain local $(3,0)$-quasigeodesic loops.  As an application we make progress towards a question of Shapiro regarding groups admitting a uniquely geodesic Cayley graph.
\end{abstract}

\maketitle

\section{Introduction}
Hyperbolic groups were introduced by Gromov \cite{Gromov1987} and revolutionised the study of finitely generated groups. Arguably, their most remarkable feature is that hyperbolicity connects several, and at a first glance independent, areas of mathematics. Confirming this, there are several different characterisations of hyperbolicity --- such as the geometric thin triangle condition \cite{Gromov1987}, the dynamical characterisation via convergence actions \cite{Bowditch1998}, surjectivity of the comparison map in bounded cohomology \cite{Mineyev2001,Mineyev2002,Franceschini2018} and vanishing of $\ell^\infty$-cohomology \cite{Gersten1998}, linear isoperimetric inequality \cite{Gromov1987}, all asymptotic cones being $\RR$-trees \cite{Gromov1987}, and others \cite{Gromov1987,Gromov1993,Papasoglu1995b,AllcockGersten1999, Gilman02, ChatterjiNiblo2007,Wenger2008}.

Another significant feature of hyperbolic groups is that they present very strong algorithmic properties. Most notably, they have solvable isomorphism problem \cite{DahmaniGuirardelIsomorphism, SelaIsomorphism}, they are biautomatic \cite{EpsteinEtAl1992} and so the word problem can be solved via finite state automata, and sets of their rational quasigeodesics form a regular language \cite{HoltRees2003}.

This last property will be a central focus in the paper, and we call it rational regularity, or for short $\QREG$.

\begin{defn}\label{defn.QREG}
A finitely generated group $G$ is $\QREG$ if for all rational $\lambda \geq 1$, real $\epsilon \geq 0$ and finite generating sets $S$, the $(\lambda, \epsilon)$--quasigeodesics in the Cayley graph $\Gamma(G,S)$ form a regular language. 
\end{defn}

As mentioned, in \cite{HoltRees2003} Holt and Rees prove that every word hyperbolic group is $\QREG$. It is natural to ask if this provides a characterization of hyperbolic groups, as was conjectured in \cite[Problem~1]{CordesRussellSprianoZalloum2020}. The main result of the paper is the following.

\begin{thm}\label{thm.main}
A finitely generated group is hyperbolic if and only if it is $\QREG$.
\end{thm}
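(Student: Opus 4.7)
The forward implication, that hyperbolic groups are $\QREG$, is the theorem of Holt and Rees \cite{HoltRees2003}. The plan is therefore to prove the converse by contrapositive: assuming $G$ is not hyperbolic, we shall exhibit rational $\lambda, \epsilon$ for which the language $L_{\lambda, \epsilon}$ fails to be regular.

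The geometric engine is the quantitative characterisation of hyperbolicity promised in the abstract, which we intend to establish as a separate auxiliary theorem: a geodesic metric space is hyperbolic if and only if there exists $R > 0$ such that it admits no $R$--local $(3,0)$--quasigeodesic loops. Granted this, non-hyperbolicity of $G$ supplies, for each $R \in \NN$, a loop $\gamma_R$ in $\cay(G, S)$ (for some fixed finite generating set $S$) of length $\ell_R \geq R$ such that every sub-path of $\gamma_R$ of length at most $R$ is a $(3,0)$--quasigeodesic. Such $\gamma_R$ is a word in the generators representing $1_G$ whose every short factor is a certified $(3,0)$--quasigeodesic while the full word manifestly is not.

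The combinatorial task is to parlay the existence of such loops, for arbitrarily large $R$, into non-regularity of some $L_{\lambda, \epsilon}$. Fix rational parameters $(\lambda, \epsilon)$ with $\lambda \geq 3$, so that $L_{3,0} \subseteq L_{\lambda, \epsilon}$, and suppose $L_{\lambda, \epsilon}$ is recognised by a DFA $M$ with $N$ states. Choose $R \gg N$ and the corresponding loop $\gamma_R$. Every prefix of $\gamma_R$ of length at most $R$ lies in $L_{\lambda, \epsilon}$, so $M$ processes it through accepting states; pigeonhole identifies a pumpable factor $y$ of length at most $N$ with the property that the prefix factorises as $xyz$ and $xy^k z \in L_{\lambda, \epsilon}$ for all $k \geq 0$. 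The plan is to couple these pumped prefixes with the closing suffix of $\gamma_R$: since $\gamma_R$ represents $1_G$, the suffix's group element is the inverse of the prefix's, and hence for $k$ sufficiently large the pumped prefix --- or some modified concatenation leveraging the loop --- yields a word whose endpoint in $G$ is forced too close to the identity to satisfy the $(\lambda, \epsilon)$--quasigeodesic lower bound, contradicting acceptance by $M$.

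The principal obstacle we anticipate is that single-factor pumping at one position only ensures linear growth of the pumped element in $G$, which is in general consistent with the quasigeodesic inequality; this is visible already in the model case of $\mathbb{Z}^2$ with the square loops $a^n b^n a^{-n} b^{-n}$, where pumping inside the initial $a^n$--block never exits $L_{3,0}$. Overcoming this requires exploiting the full $\QREG$ hypothesis --- regularity of $L_{\lambda, \epsilon}$ for \emph{all} rational $(\lambda, \epsilon)$ --- either by selecting parameters for which the quasigeodesic bound becomes sharp enough to be violated after pumping, or by pumping simultaneously at two widely separated positions of $\gamma_R$ so that the loop identity couples their contributions and forces the required cancellation in $G$. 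Formalising this quantitative pumping step is the technical crux of the proof.
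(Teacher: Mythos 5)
Your overall architecture matches the paper's: Holt--Rees for the forward direction, and for the converse the extraction of arbitrarily large-scale locally $(3,0)$--quasigeodesic loops from non-hyperbolicity, followed by a language-theoretic argument. However, there are two genuine gaps, one of which you flag yourself. First, the geometric auxiliary statement you propose is too weak: the paper's version (\Cref{thm:non-hyp_implies_star}) produces, for a non-hyperbolic space, $L_n$-locally $(3,0)$--quasigeodesic loops $\gamma_n$ satisfying the additional \emph{linear length bound} $\ell(\gamma_n)\leq K L_n$ for a uniform $K$. This bound is not decorative --- it is exactly the quantitative control that powers the language-theoretic step --- and your statement omits it. (The loops come from approximating a simple geodesic triangle in a non-tree asymptotic cone by geodesic polygons and splicing their edges; the length bound reflects the finite length of the cone triangle.)

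Second, and more seriously, the combinatorial step is unresolved, and the pumping lemma is the wrong tool, as your own $\mathbb{Z}^2$ example $a^nb^na^{-n}b^{-n}$ already shows. The paper does not pump. Instead (\Cref{prop.condition.star}) it runs a Myhill--Nerode-type argument comparing two loops $\gamma_m$ and $\gamma_n$ at widely separated scales: truncate each at the maximal time $t_m$, resp.\ $t_n$, at which the prefix is still a $(\lambda,0)$--quasigeodesic (here $\lambda=3$), let $T_n$ be the first time $\gamma_n$ fails to be a $(\lambda',0)$--quasigeodesic for a larger $\lambda'$, and show that the suffix $\gamma_n\vert_{[t_n,T_n]}$ \emph{distinguishes} the two prefixes: appended to $\gamma_n\vert_{[0,t_n]}$ it destroys the $(\lambda',0)$ property by definition of $T_n$, while appended to the much shorter $\gamma_m\vert_{[0,t_m]}$ it remains a $(\lambda',0)$--quasigeodesic --- and verifying the latter is precisely where $\ell(\gamma_n)\leq KL_n$ (giving $T_n\leq Kt_n$) enters. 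Iterating over a rapidly growing subsequence of scales yields infinitely many pairwise distinguishable states, hence non-regularity. Note also that this only gives non-regularity of the $(\lambda',0)$--quasigeodesic language for $\lambda' > (2K-1)\cdot 3$, not for every $\lambda\geq 3$; that suffices because $\QREG$ quantifies over all rational parameters, but it means your hope of reaching a contradiction at a fixed small $\lambda$ is likely misplaced. Until you supply an argument of this kind --- two scales plus a distinguishing extension, with the length bound feeding the estimates --- the converse direction is not proved.
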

We remark that it is necessary to not consider only geodesics. In \cite{Cannon1984}, Cannon proved that for any finite generating set the geodesics in a hyperbolic group form a regular language. However, this does not characterise hyperbolicity: Neumann and Shapiro \cite[Propositions 4.1; 4.4]{NeumannShapiro1995} prove that for any finite generating set the geodesics in an abelian group form a regular language.

In fact, we prove the following strong converse to the result of Holt and Rees.
\begin{thm}\label{thm.converse}
Let $G$ be a finitely generated non-hyperbolic group. Then for all finite generating sets $S$ and all $\lambda' > 54$ the set of $(\lambda',0)$-quasigeodesics is not regular in $\Gamma(G,S)$. 
\end{thm}
Once this is proved, it is clear that non-hyperbolic groups can never be $\QREG$. 

To prove Theorem~\ref{thm.converse} we need a strong quantitative characterisation of hyperbolicity. It is known that a geodesic metric space is hyperbolic if and only if local quasigeodesics are global quasigeodesics \cite[Proposition~7.2.E]{Gromov1987}. More precisely, the contrapositive can be stated as follows: a space is non-hyperbolic if and only if there exists a pair of constants $(\lambda, \epsilon)$, a sequence $L_n \rightarrow \infty$ and a sequence of $L_n$-locally $(\lambda,\epsilon)$-quasigeodesic paths which are not global $(\lambda',\epsilon')$-quasigeodesics for any uniform choice of constants $(\lambda',\epsilon')$. Hence, \emph{a priori}, to check for hyperbolicity one would want to consider all choices of $(\lambda, \epsilon)$ and all choices of locally $(\lambda,\epsilon)$-quasigeodesic paths. We get around this using a criterion of Hume and Mackay (\cite{HumeMackay:poorly}) that essentially states that one needs only consider $L$-locally $(18,0)$-quasigeodesic loops whose length is comparable to $L$. To prove Theorem~\ref{thm.converse}, we use such a sequence of loops to contradict the pumping lemma. 

One of the limitations of the criterion of Hume and Mackay is that it only works for graphs, as it relies on Papazoglu's bigon criterion \cite{Papasoglu1995b}, which is false for general geodesic metric spaces.  We remark that a version of the bigon criterion for general metric spaces appeared in the master's thesis of Pomroy \cite{Pomroy}, a proof can be found in \cite[Appendix]{ChatterjiNiblo2007}. We develop a characterization of hyperbolicity, analogous to Hume and Mackay, that works for all geodesic metric spaces and does not rely on any bigon criteria. 

\begin{thm}\label{thm:main.technical} A geodesic metric space $X$ is not hyperbolic if and only if there exists a sequence $L_n \rightarrow \infty$ and a sequence of non-constant $L_n$-locally $(3,0)$-quasigeodesic loops $\gamma_n$ that satisfy $\ell(\gamma_n) \leq K L_n$, where $K$ is some constant that does not depend on $n$. \end{thm}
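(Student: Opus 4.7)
I would argue by contraposition. Suppose such a sequence of non-trivial loops exists and, for contradiction, that $X$ is $\delta_0$-hyperbolic. Gromov's local-to-global principle for quasigeodesics in hyperbolic spaces (Proposition~7.2.E of \cite{Gromov1987}), applied to the constants $(3,0)$, supplies constants $L_0$ and $(\lambda',\epsilon')$ depending only on $\delta_0$ such that every $L$-local $(3,0)$-quasigeodesic with $L\ge L_0$ is a global $(\lambda',\epsilon')$-quasigeodesic. For $L_n\ge L_0$ the loop $\gamma_n$ is then globally $(\lambda',\epsilon')$-quasigeodesic, and evaluating the quasigeodesic inequality at its coinciding endpoints forces $\ell(\gamma_n)\le \lambda'\epsilon'$. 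On the other hand, any non-trivial $L_n$-local $(3,0)$-quasigeodesic loop must satisfy $\ell(\gamma_n)>L_n$: otherwise the full loop, being a single subpath of length $\le L_n$, would itself be a global $(3,0)$-quasigeodesic of positive length, which violates the inequality at its coinciding endpoints. Combining $L_n<\ell(\gamma_n)\le\lambda'\epsilon'$ with $L_n\to\infty$ yields a contradiction.

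\textbf{Forward direction.} Assume $X$ is not hyperbolic. By one of the standard characterisations of non-hyperbolicity (e.g.\ the failure of uniform thinness of geodesic triangles), there is a sequence of geodesic triangles $T_n=[x_n,y_n,z_n]$ whose insize $\delta_n$ tends to infinity: the three tripod points on the sides of $T_n$, located at the Gromov-product distances from each vertex, have some pairwise distance of order $\delta_n$. From each $T_n$, I would carve out $\gamma_n$ as a polygonal loop obtained by retaining two short arcs of the sides of $T_n$ adjacent to the vertex $z_n$ where the insize is attained and closing the configuration with two geodesic shortcuts across the other ends. With cut-off parameters of order $\delta_n$ from $z_n$, the total length of $\gamma_n$ is $O(\delta_n)$, and one takes $L_n$ proportional to $\delta_n$, automatically yielding the bound $\ell(\gamma_n)\le K L_n$ with a universal constant $K$.

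\textbf{Main obstacle.} The bulk of the argument lies in verifying the $L_n$-local $(3,0)$-quasigeodesic condition. Subpaths contained in a single arc of the polygon are geodesics and cause no problem; the delicate case is a subpath straddling one of the corners, where one must show that the endpoints of the subpath are separated by at least one-third of its length. The subtle issue is that, in a non-hyperbolic space, the two sides of $T_n$ emanating from $z_n$ may coincide along a long initial segment and only later begin to separate, so cutting off inside this shared segment would produce a cusp that wrecks the $(3,0)$ bound. The key technical input is the divergence function $t\mapsto d(\sigma_{x_n}(t),\sigma_{y_n}(t))$, which is $2$-Lipschitz, vanishes at $t=0$ and attains value at least $\delta_n$ by parameter $(x_n|y_n)_{z_n}$. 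Choosing the cut-off at the first parameter at which this function attains a controlled fraction of $\delta_n$ guarantees that the sides have genuinely separated at the cut-off, so the corresponding corner of $\gamma_n$ is transversal enough that the $(3,0)$-bound holds there at scale $\Omega(\delta_n)$; a symmetric argument controls the remaining corners. The constant $3$ provides just enough slack to absorb the imperfect angles which arise in a general geodesic metric setting, and bounding both the length and the locality scale by the same function of $\delta_n$ gives the required uniform $K$.
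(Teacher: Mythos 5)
Your backward direction is correct and is essentially the paper's own argument (Proposition~\ref{prop:star_implies_non-hyp}): apply the local-to-global principle and observe that a non-trivial loop cannot be a global quasigeodesic; your explicit lower bound $\ell(\gamma_n)>L_n$ for non-trivial loops is a welcome refinement. The forward direction, however, has genuine gaps. The paper does not work with fat triangles in $X$ directly; it passes to an asymptotic cone, extracts a \emph{simple} geodesic triangle there, and uses Proposition~\ref{prop: Drutu Sapir} to realise it as the ultralimit of simple geodesic $k$-gons $P_n$ in $X$. Simplicity and compactness of the limit supply exactly the two pieces of control your construction lacks: a linear bound $\ell(P_n)\leq \rho\mu_n\ell(\Delta)$ on total length, and a uniform lower bound of order $\mu_n$ on the distance from any point of an edge to the non-adjacent edges (equation~\eqref{proposition:star_distance_edges}). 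A triangle in $X$ with insize $\delta_n$ carries neither: the Gromov product $(x_n\mid y_n)_{z_n}$, hence the length of the arcs you retain, can be arbitrarily large compared with $\delta_n$, so $\ell(\gamma_n)\leq KL_n$ with $L_n\asymp\delta_n$ fails for a uniform $K$; and if you instead truncate the arcs at parameter $O(\delta_n)$ to force that bound, the divergence function $f(t)=d(\sigma_{x_n}(t),\sigma_{y_n}(t))$ is not monotone, so the two sides may have re-converged at your outer cut, collapsing the loop and violating the $(3,0)$ condition for equal-parameter pairs near the outer shortcut.

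The corner estimates are also not established by your cut-off rule. The divergence function controls only equal-parameter distances between the two sides; it says nothing about the geometry of the connecting geodesics relative to the retained arcs. A shortcut emanating from $\sigma_{x_n}(s_n)$ may backtrack along the arc $\sigma_{x_n}([s_n,\cdot])$, producing a point $y$ on the shortcut at parameter $\rho$ from the corner with $d\bigl(y,\sigma_{x_n}(s_n+\rho)\bigr)$ arbitrarily small while the path distance is $2\rho$; nothing in your setup excludes this, and ``transversal enough'' is precisely the claim that needs proof. The paper avoids this by choosing the cut points relative to the \emph{next edge}: $q^j$ is the \emph{first} point of $a^j$ at distance $\leq L$ from $a^{j+1}$, and $p^{j+1}$ is a nearest point of $a^{j+1}$ to $q^j$; both corner inequalities then follow from the triangle inequality applied to these extremal choices. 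If you want to salvage a direct fat-triangle argument you would need to import both of these devices, and it is unclear how to obtain the required uniform separation of the pieces without the simple polygon produced by the asymptotic cone.
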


Note that requiring that the loops are non-constant implies that $\diam (\gamma_n)$ diverges to infinity. Indeed,  $(3,0)$-quasigeodesics are injective and for $\gamma_n$ to be a loop we must have that $[0,L_n]$ is a proper subset of the domain. So, morally Theorem~\ref{thm:main.technical} states that in a non-hyperbolic group there are paths that are very well-behaved locally, but globally are loops of increasing size, hence cannot be global quasi-geodesics for any choice of constants.

Although striking, the presence of a sharp gap in the behaviour of local-quasigeodesics is not surprising. For instance, it is known that the Dehn function of a finitely presented group has a gap. A deep theorem of S. Wenger 
\cite{WengerIsoperim}, extending results of \cite{Gromov1987,Olshanski1991,Bowditch1995,Papasoglu1995}, shows that if the isoperimetric function satisfies $D(x) \leq \frac{1-\epsilon}{4\pi}x^2$, then it is in fact linear. 

Our strategy in proving Theorem~\ref{thm:main.technical} relies on the study of asymptotic cones of metric spaces. If $X$ is non-hyperbolic, then there is an asymptotic cone that is not a tree \cite[2.A]{Gromov1993}, and it contains a simple loop. By using a series of approximations, we exploit this loop to produce a family of loops of controlled length that are locally $(3,0)$--quasigeodesic.

\subsection*{A question of Shapiro}
A natural class of graphs to consider is the class of geodetic graphs. A connected graph is called \emph{geodetic} if for any pair of vertices there is exactly one geodesic connecting them. In \cite{Shapiro1997}, M. Shapiro asked when a group admits a (locally finite) geodetic Cayley graph. He conjectures that such a group needs to be \emph{plain}, that is, a free product where the factors are either free or finite. 
In \cite{Papasoglu1993}, Papasoglu proved that a geodetic hyperbolic group is virtually free. It is still open whether all geodetic groups are hyperbolic, and whether all geodetic virtually free groups are plain. 

We provide an answer to the first implication under an additional, language theoretic, assumption.

\begin{thm}\label{thm:UG_Hyperbolic}
Let $G$ be a finitely generated group with a generating set $S$ such that $\Gamma(G, S)$ is geodetic.  If there exists $\lambda>3$ such that the language of $(\lambda, 0)$--quasigeodesics is regular, then $G$ is hyperbolic and hence virtually free.
\end{thm}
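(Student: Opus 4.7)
The overall plan is to adapt the pumping-lemma strategy used to prove Theorem~\ref{thm.main}, leveraging the geodetic hypothesis to make do with regularity at the single pair $(\lambda, 0)$. Once hyperbolicity of $G$ is established, the virtual freeness conclusion is immediate from Papasoglu's theorem \cite{Papasoglu1993}.

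Argue by contradiction and assume that $G$ is not hyperbolic. By Theorem~\ref{thm:main.technical}, there are scales $L_n\to\infty$ and loops $\gamma_n$ in $\Gamma=\Gamma(G,S)$ that are $L_n$-locally $(3,0)$-quasigeodesic and satisfy $\ell(\gamma_n)\leq KL_n$. Since $\lambda>3$, every $(3,0)$-quasigeodesic is a $(\lambda,0)$-quasigeodesic; hence every subword of $\gamma_n$ of length at most $L_n$ lies in the regular language $R$ of $(\lambda,0)$-quasigeodesics. Fix a length-$L_n$ subword $w_n$ of $\gamma_n$, taking $n$ large enough that $L_n$ exceeds the pumping constant $N$ of $R$.

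Apply the pumping lemma to obtain a factorisation $w_n = x_n y_n z_n$ with $|x_n y_n| \leq N$, $|y_n| \geq 1$, and $x_n y_n^k z_n \in R$ for every $k \geq 0$. Since only finitely many words of length at most $N$ exist, we may pass to a subsequence and assume $x_n = x$ and $y_n = y$ are fixed words; set $g := \bar{y} \in G$. Each prefix $xy^k$ of a pumped word is itself a $(\lambda,0)$-quasigeodesic, yielding
\[
d(e, \bar{x}\, g^k) \geq (|x| + k|y|)/\lambda \qquad\text{for all } k \geq 0,
\]
so $\langle g\rangle$ is undistorted infinite cyclic and the powers of $g$ escape to infinity linearly along the pumped paths.

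Now the geodetic hypothesis enters. For each $k$, the unique geodesic $\sigma_k$ from $e$ to $\bar{x}\, g^k \bar{z}_n$ is a well-defined path, and uniqueness of geodesics forces a strong coherence among the $\sigma_k$ as $k$ varies (no two distinct paths can be simultaneously geodesic between the same endpoints). Combining this rigidity with the loop structure of $\gamma_n$ — the complementary subpath of length at most $(K-1)L_n$ closes $xy z_n$ back to its starting vertex, so $d(e, \bar{x}\, g\, \bar{z}_n) \leq (K-1)L_n$ — and applying the same analysis to all $L_n$-subpaths of $\gamma_n$ (each of which provides its own pumpable factorisation, hence its own undistorted cyclic element), one builds a configuration of near-geodesic segments whose incompatibility with the uniqueness of geodesics produces a contradiction.

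I expect the main obstacle to be this last geometric step: converting the pumping-induced quasigeodesic family and the loop-closure relation into a concrete violation of geodicity, without reaching for Morse-type tools that presuppose hyperbolicity. The likely route is to exploit the absence of even cycles in geodetic graphs, together with the linear growth of $d(e, \bar{x}\, g^k)$ forced by the $(\lambda,0)$-quasigeodesicity, to show that the $\sigma_k$ must stabilise into an honest geodesic line on which $\langle g\rangle$ acts by translation, which then conflicts with the uniform bound $\ell(\gamma_n)\leq KL_n$ coming from Theorem~\ref{thm:main.technical}.
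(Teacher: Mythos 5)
There is a genuine gap, and it sits exactly where you flag it: the ``last geometric step'' is never carried out, and without it there is no contradiction. Pumping a $(\lambda,0)$--quasigeodesic subword of $\gamma_n$ and observing that $xy^kz_n$ stays in the language $R$ is not in tension with anything --- a non-hyperbolic group can perfectly well contain an undistorted infinite cyclic subgroup and arbitrarily long $(\lambda,0)$--quasigeodesics, so the linear lower bound $d(e,\bar{x}g^k)\geq(\absval{x}+k\absval{y})/\lambda$ proves nothing on its own. The appeal to ``coherence among the $\sigma_k$'' and an ``incompatibility with the uniqueness of geodesics'' is a placeholder for the entire content of the theorem. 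There is also a quantitative obstruction to your starting point: if you feed the $(3,0)$--local quasigeodesic loops of \Cref{thm:main.technical} into the paper's state-counting argument (\Cref{prop.condition.star}), you only obtain non-regularity of $(\lambda',0)$--quasigeodesics for $\lambda'>(2K-1)\cdot 3$, where $K$ is the uncontrolled constant produced by \Cref{thm:main.technical}; this threshold never comes down to the hypothesis ``some $\lambda>3$''.

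The paper uses the geodetic hypothesis at the opposite end of the argument, precisely to fix this. By Papasoglu's bigon criterion, a non-hyperbolic Cayley graph has geodesic bigons of unbounded width; in a geodetic graph the only bigons that can occur have one endpoint at the midpoint of an edge, and by a lemma of Elder--Piggott such a bigon yields an isometrically embedded circuit: a simplicial loop of odd length $2n+1$ in which every subsegment of length at most $n$ is a genuine geodesic. These are $n$-locally $(1,0)$--quasigeodesic loops of length $2n+1$, so condition \eqref{condition.star} holds with $\lambda=1$ and with $K$ as close to $2$ as desired, and \Cref{prop.condition.star} then rules out regularity of the $(\lambda,0)$--quasigeodesic language for every $\lambda>2K-1$, i.e.\ for every $\lambda>3$. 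In short, the geodetic hypothesis must be converted into \emph{local geodesics} before the language-theoretic argument is run, rather than invoked afterwards to rescue an argument built on $(3,0)$--local quasigeodesics; your proposal is missing both that conversion and any concrete mechanism for the final contradiction.
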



\subsection*{Structure of the paper}
In \Cref{sec:background} we give the necessary background on Cayley graphs, hyperbolic metric spaces, quasigeodesics, languages, automata, and asymptotic cones.  In \Cref{sec:proofs} we provide proofs of the results from the introduction.  More specifically, in \Cref{sec.cones} we prove \Cref{thm:main.technical} and compare it to \cite[Proposition~1.5]{HumeMackay:poorly}, in \Cref{sec.reg} we prove \Cref{thm.main} and \Cref{thm.converse}, and in \Cref{sec.geod} we prove \Cref{thm:UG_Hyperbolic}.

\subsection*{Acknowledgements}
The first author was supported by the European Research Council (ERC) under the European Union’s Horizon 2020 research and innovation programme (Grant agreement No. 850930). The third author was partly supported by the Christ Church Research Centre. We would like to thank  the LabEx of the Institut Henri Poincar\'e (UAR 839 CNRS-Sorbonne Universit\'e) for their support during the trimester program ``Groups acting on Fractals, Hyperbolicity and Self-similarity''.  The second author would like to thank their supervisor Cornelia Dru\textcommabelow{t}u for useful discussions on these topics. We are also grateful to Panos Papasoglu  and Sarah Rees for a number of helpful conversations, to Murray Elder and Adam Piggott for helpful conversations regarding Shapiro's question, and to Elia Fioravanti for informing us of Hume--Mackay's characterization of hyperbolicity. Finally, we would like to thank the anonymous referees that carefully read the paper and suggested a number of clarifications and improvements.

\section{Background}\label{sec:background}
\subsection{Cayley graphs, hyperbolicity, and quasigeodesics}
Let $G$ be a finitely generated group with generating set $S$. We denote by $\Gamma(G,S)$ the \emph{Cayley graph} of $G$ with respect to $S$, that is, the graph with vertices $G$ and edges $\{g,gs\}$ where $g\in G$ and $s\in S$.
We denote by $\absval{g}$ the word-length of $g$ with respect to $S$; equivalently, this is equal to $d_{\Gamma(G,S)}(e,g)$.

Let $\delta\geq 1$.  A metric space $X$ is $\delta$-\emph{hyperbolic} if every geodesic triangle in $X$ is $\delta$-thin.  Here a geodesic triangle is $\delta$-\emph{thin} if every edge is contained in the $\delta$-neighbourhood of the two other edges.  We say a finitely generated group $G$ is \emph{hyperbolic} if the Cayley graph $\Gamma(G,S)$ is a $\delta$-hyperbolic metric space for some finite generating set $S$.

Let $\lambda\geq1$ and $\epsilon\geq0$.  Given metric spaces $X$ and $Y$, a $(\lambda,\epsilon)$-\emph{quasi-isometric embedding} $f\colon X\to Y$ is a function satisfying
\[\frac{1}{\lambda} d_X(x,y) - \epsilon \leq d_Y(f(x),f(y)) \leq \lambda d_X(x,y) + \epsilon\]
for all $x,y \in X$.

\begin{defn}
In the context of a Cayley graph $\Gamma(G,S)$, by a \textit{$(\lambda,\epsilon)$-quasigeodesic} of length $a \in \N$ we mean a simplicial path $c: [0,a] \rightarrow \Gamma(G,S)$ such that for any two integers $x,y \in [0,a]$ we have
\[d_{[0,a]}(x,y) \leq \lambda d_G(c(x),c(y)) + \epsilon\]
Equivalently, a $(\lambda,\epsilon)$-quasigeodesic is a finite word $w$ on the alphabet $S \cup S^{-1}$ for which $\length(u) \leq \lambda d_G(e,u) + \epsilon$ for all subwords of $u$ of $w$. 
\end{defn}

\begin{defn} \label{def.qgmetricspace}
In the context of an arbitrary metric space $X$, a $(\lambda,\epsilon)$-\emph{quasigeodesic} of length $a>0$ in $X$ is a quasiisometric embedding $c\colon [0,a]\to X$.
\end{defn}

Given a path $c\colon [0,a]\to \Gamma(G,S)$ or $c\colon [0,a]\to X$, we say that $c$ is an $L$-\emph{locally} $(\lambda,\epsilon)$-\emph{quasigeodesic} or an \emph{$(L,\lambda,\epsilon)$-local-quasigeodesic} if $c$ restricted to each subset of $[0,a]$ of length $L$ is a $(\lambda,\epsilon)$--quasigeodesic. We say $c$ is a $(L, \lambda,\epsilon)$-\emph{quasigeodesic loop} if, in addition, $c(0)=c(a)$. We denote the length of a path $c$ by $\ell(c)$.

\subsection{Regular languages and automata}
The following definitions are standard and may be found in \cite[Chapter~1]{EpsteinEtAl1992}. Given a finite set $A$, let $A^\star$ be the free monoid generated by $A$, i.e. the set of finite words that can be written with letters in $A$. A \emph{language} over the alphabet $A$ is a subset $L\subseteq A^\star$. A \emph{finite state automaton} (FSA) $\mc{M}$ over the alphabet $A$ consists a finite oriented graph $\Gamma(\mc{M})$, together with an edge label function $\ell\colon E(\Gamma(\mc{M})) \to A$, a chosen vertex $q_I \in V(\Gamma(\mc{M}))$ called the \emph{initial state} and subset $Q_F \subset V(\Gamma(\mc{M}))$ of \emph{final states}. The vertices of $\Gamma(\mc{M})$ are often referred to as \emph{states}. 

Let $\calm$ be an FSA over an alphabet $A$.  We say a string $w\in L$ is \emph{accepted} by $A$ if and only if there is an oriented path \(\gamma\) in \(\Gamma(\mc{M})\) starting from \(q_I\) and ending in a vertex  \(q \in Q_F\) such that $\gamma$ is labelled by $w$.  A language $L$ is \emph{regular} if and only if there exists an FSA $\calm$ such that $L$ coincides with the strings of $A^\star$ accepted by $\calm$.

Let $G$ be a group generated by a finite set $S$.  An element $w \in (S \cup S^{-1})^\ast$ labels a path in $\Gamma(G,S)$ which starts at $e$.  We say $w$ is a \emph{geodesic/$(\lambda,\epsilon)$--quasigeodesic/$(L,\lambda,\epsilon)$-local-quasigeodesic word} if it labels a path in $\Gamma(G,S)$ with the corresponding property. 

We say that the set $L^{(\lambda,\epsilon)}$ of $(\lambda,\epsilon)$--quasigeodesic words $w$ over $S \cup S^{-1}$ form the \emph{$(\lambda,\epsilon)$--quasigeodesic language of $G$ over $S$}.

\subsection{Asymptotic cones}
In this section we will give the necessary background on asymptotic cones. These concepts and definitions will only be needed for the proof of \Cref{thm:main.technical} in \Cref{sec.cones}. The idea of an asymptotic cone first appeared in the proof of Gromov's Polynomial Growth Theorem \cite{Gromov1981}, however, it was first formalised by Wilkie and van den Dries \cite{WilkievandenDries1984}.

An \emph{ultrafilter} $\omega$ on $\NN$ is a set of nonempty subsets of $\NN$ which is closed under finite intersection, upwards-closed, and if given any subset $X\subseteq\NN$, contains either $X$ or $\NN\backslash X$.  We say $\omega$ is \emph{non-principal} if $\omega$ contains no finite sets. We may equivalently view a non-principal ultrafilter $\omega$ as a finitely additive measure on the class $2^\NN$ of subsets of $\NN$ such that each subset has measure equal to $0$ or $1$, and all finite sets have measure $0$.  If some statement $P(n)$ holds for all $n\in X$ where $X\in\omega$, then we say that $P(n)$ holds \emph{$\omega$-almost surely}.

Let $\omega$ be a non-principal ultrafilter on $\NN$ and let $X$ be a metric space.  If $(x_n)_{n\in\NN}$ is a sequence of points in $X$, then a point $x$ satisfying for every $\epsilon>0$ that $\{n\ |\ d(x_n,x)\leq\epsilon\}\in\omega$, is called an \emph{$\omega$-limit} of $x_n$ and denoted by $\lim_\omega x_n$. Given a bounded sequence $x_n \in X$, there always exists a unique ultralimit $\lim_\omega x_n$. 

Let $\omega$ be a non-principal ultrafilter on $\NN$. Let $(X_n,d_n)_{n\in\NN}$ be a sequence of metric spaces with specified base-points $p_n\in X_n$.  Say a sequence $(y_n)_{n\in\NN}$ is \emph{admissible} if the sequence $(d_{X_n}(p_n,y_n))_{n\in \NN}$ is bounded.  Given admissible sequences $x=(x_n)$ and $y=(y_n)$, the sequence $(d_{X_n}(x_n,y_n))$ is bounded and we define $\hat d_\infty(x,y)\coloneqq \lim_\omega d_n(x_n,y_n)$.  Denote the set of admissible sequences by $\calx$.  For $x,y\in\calx$ define an equivalence relation by $x\sim y$ if $\hat d_\infty(x,y)=0$.  The \emph{ultralimit} of $(X_n,p_n)$ with respect to $\omega$ is the metric space $(X_\infty,d_\infty)$, where $X_\infty=\calx/\sim$ and for $[x],[y]\in X_\infty$ we set $d_\infty([x],[y])=\hat d_\infty(x,y)$. Given an admissible sequence of elements $x_n \in X_n$ we define their ultralimit in $X_\infty$ to be $\lim_\omega x_n := [(x_n)]$. Given a sequence of subsets $A_n \subset X_n$ we can define their ultralimit in $X_\infty$ to be the set $\lim_\omega(A_n) := \{{[(x_n)]} \mid  x_n\in A_n\}$, where we only consider admissible sequences $(x_n)$.

Let $\omega$ be a non-principal ultrafilter on $\NN$ and let $(\mu_n)$ be a diverging, non-decreasing sequence.   Let $(X,d)$ be a metric space and consider the sequence of metric spaces $X_n=\left(X,\frac{1}{\mu_n}d\right)$ for $n\in\NN$ with basepoints $(p_n)$.  The $\omega$-ultralimit of the sequence $(X_n,p_n)$ is called the \emph{asymptotic cone} of $X$ with respect to $\omega$, $(\mu_n)$, and $(p_n)$ and denoted $\cone(X,(\mu_n),(p_n))$.  If the sequence of basepoints is constant, then we denote the asymptotic cone by $\cone(X,(\mu_n))$. In the case of a finitely generated group, we always assume that the basepoint is the identity.

The following is \cite[Proposition~3.29(c)]{DrutuSapir2005} which we will use in the proof of \Cref{thm:main.technical}.

\begin{prop}\label{prop: Drutu Sapir}
Consider a non-principal ultrafilter $\omega$ on $\N$ and a sequence of metric spaces $(X_n, d_n)$ with basepoints $p_n \in X_n$. Suppose there exists a simple geodesic triangle in $(X_\infty, d_\infty)$. Then there exists a (possibly different) simple geodesic triangle $\Delta$ in $X_\infty$, a constant $k \geq 2$, and a sequence of simple geodesic $k$-gons $P_n$ in $X_n$ such that $\lim_\omega(P_n) = \Delta$.
\end{prop}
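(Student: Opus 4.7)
The approach is to approximate the given simple geodesic triangle $\Delta_\infty \subset X_\infty$ by a sequence of (possibly non-simple) geodesic $3$-gons in the spaces $X_n$, and then to extract simple sub-polygons whose ultralimit remains a simple geodesic triangle. The key observation is that the simplicity of $\Delta_\infty$ forces all self-intersections of the approximating loops to have negligible rescaled length, so that cutting them away costs nothing in the limit.

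First I would write the vertices of $\Delta_\infty$ as ultralimits $a^\infty = \lim_\omega a_n$, $b^\infty = \lim_\omega b_n$, $c^\infty = \lim_\omega c_n$, and use the standard realisation of geodesics in ultralimits of geodesic spaces to pick, $\omega$-almost surely, geodesic segments $\alpha_n, \beta_n, \gamma_n$ in $X_n$ between consecutive approximate vertices whose rescaled ultralimits are the sides of $\Delta_\infty$. The concatenation $\ell_n := \alpha_n \ast \beta_n \ast \gamma_n$ is then a geodesic $3$-gon in $X_n$ with $\lim_\omega \ell_n = \Delta_\infty$. If $\ell_n(s_n) = \ell_n(t_n)$ for $s_n < t_n$, then the two rescaled parameters $s_n/\mu_n$ and $t_n/\mu_n$ must map to the same point of the simple loop $\Delta_\infty$; modulo the perimeter they must $\omega$-agree, which forces either $(t_n - s_n)/\mu_n \to 0$ or the complementary arc length to go to $0$ in the rescaled metric.

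Next I would iteratively cut out the shorter of the two sub-loops at each self-intersection, removing each time a segment of rescaled length tending to zero. The result is a simple loop $P_n$ in $X_n$ whose sides are sub-arcs of $\alpha_n, \beta_n, \gamma_n$, and whose rescaled length differs from that of $\ell_n$ by a quantity going to zero. After passing to a subsequence so that the number of sides $k \geq 2$ of $P_n$ is constant, the limit $\Delta := \lim_\omega P_n$ coincides with $\Delta_\infty$ as a subset of $X_\infty$ up to collapse of the short sides of $P_n$ to points, and so it is a simple geodesic polygon with at most three non-degenerate sides. If $\Delta$ happens to be a bigon, we can promote it to a (degenerate) simple triangle by inserting an auxiliary midpoint on one of its sides, without changing the polygonal structure of the $P_n$.

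The main obstacle will be bounding the number of sides $k_n$ of $P_n$ uniformly in $n$, since \emph{a priori} the three approximating geodesics could wiggle across each other many times and produce a large number of small self-intersection sub-loops, leading after simplification to an unbounded number of short sides. To handle this I would refine the simplification: after all self-intersections have been cut, consecutive short sides whose total rescaled length tends to zero can be merged by replacing them with a single geodesic shortcut in $X_n$, at the cost of a further $o(\mu_n)$ error that does not affect the ultralimit. The simplicity of $\Delta_\infty$, which has only three non-degenerate sides, then bounds $k_n$ by a uniform constant, after which pigeonhole on the $\omega$-measure produces the desired fixed $k$.
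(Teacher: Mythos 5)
The paper does not actually prove this proposition --- it is quoted directly from Dru\textcommabelow{t}u--Sapir \cite[Proposition~3.29(c)]{DrutuSapir2005} --- so the only comparison available is with their argument. Measured against that, your attempt has a genuine gap at its very first step, and it is precisely the gap that makes the cited proposition nontrivial. You ``use the standard realisation of geodesics in ultralimits'' to produce geodesics $\alpha_n,\beta_n,\gamma_n$ in $X_n$ whose rescaled ultralimits are \emph{the given sides} of $\Delta_\infty$. What is standard is only the converse direction: an ultralimit of geodesics $[a_n,b_n]$ is \emph{some} geodesic from $\lim_\omega a_n$ to $\lim_\omega b_n$. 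A prescribed geodesic in $X_\infty$ --- in particular a side of $\Delta_\infty$ --- need not be an ultralimit of geodesics in the $X_n$ at all; ``limit geodesics'' form a proper subclass of all geodesics in general. If you replace each side of $\Delta_\infty$ by a limit geodesic with the same endpoints, simplicity can be destroyed: all three limit geodesics may, for instance, pass through a common point, so that $\lim_\omega \ell_n$ degenerates to a tripod rather than a simple triangle, and no surgery on the $\ell_n$ will then recover $\Delta_\infty$. This is exactly why the statement allows a \emph{possibly different} triangle $\Delta$ and approximating $k$-gons with $k$ possibly larger than $3$: the real content of the Dru\textcommabelow{t}u--Sapir proof is to manufacture a new simple triangle whose sides are realisable via (concatenations of) limit geodesics, and only then to approximate. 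Had your first step been valid, the proposition would be essentially trivial with $k=3$ and $\Delta=\Delta_\infty$, which should have been a warning sign.

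A secondary remark: granted geodesic sides in $X_n$ that do limit onto the sides of a simple triangle, your worry about unboundedly many self-intersection sub-loops is unfounded, while your proposed fix is the one part of the surgery that genuinely breaks. Each of $\alpha_n,\beta_n,\gamma_n$ is a geodesic, hence injective, so every self-intersection of $\ell_n$ involves two distinct sides; simplicity of the limit triangle forces all such intersection points to lie within $o(\mu_n)$ of the corresponding corner, $\omega$-almost surely and uniformly (otherwise a sequence of bad intersection points would produce a point of $X_\infty$ lying on two sides away from a vertex). A single ``outermost'' cut at each corner then removes all self-intersections at once and yields a simple geodesic $3$-gon whose sides are subsegments of the $\alpha_n,\beta_n,\gamma_n$. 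By contrast, ``merging consecutive short sides by a geodesic shortcut'' can create new self-intersections between the shortcut and the rest of the loop, so simplicity would have to be re-established and the implicit induction has no evident termination. The surgery half of your argument should therefore be reorganised around the corner-clustering observation; but the essential missing idea remains the reduction of $\Delta_\infty$ to a triangle built from limit geodesics.
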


\section{Proofs of the results}\label{sec:proofs}


\subsection{A characterisation of hyperbolic geodesic metric spaces} \label{sec.cones}

In this section, we prove \Cref{thm:main.technical}, this is a version of a result of Hume and Mackay \cite[Proposition~5.1]{HumeMackay:poorly} for arbitrary metric spaces.

\begin{defn} \label{def.star} Let $X$ be a metric space and consider the following condition:
\begin{equation}\tag{$\star$} \label{condition.star} 
    \begin{tabular}{p{0.8\textwidth}}
    There exists an increasing sequence of positive numbers $L_n \rightarrow \infty$ and a pair of constants $K, \lambda \geq 1$ such that for every $n$ there exists a non-constant $L_n$-locally $(\lambda,0)$--quasigeodesic loop $\gamma_n$ in $X$ with $\ell(\gamma_n) \leq KL_n$.
    \end{tabular}
\end{equation}
At times, it is convenient to specify the values of the constants $K$ and $ \lambda$. In that case we say that a metric space $X$ satisfies \eqref{condition.star} with constants $(K, \lambda)$.
\end{defn}


\begin{prop}\label{prop:star_implies_non-hyp}
If a metric space $X$ satisfies \eqref{condition.star}, then $X$ is not hyperbolic. 
\end{prop}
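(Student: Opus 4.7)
The plan is to argue by contraposition and invoke the classical local-to-global principle for quasigeodesics in hyperbolic spaces \cite[Proposition~7.2.E]{Gromov1987}: for every $\delta \geq 0$ and $\lambda \geq 1$ there exist constants $L_0, \lambda', \epsilon'$, depending only on $(\delta, \lambda)$, such that every $L_0$-local $(\lambda, 0)$-quasigeodesic in a $\delta$-hyperbolic geodesic space is a global $(\lambda', \epsilon')$-quasigeodesic. Paired with the elementary observation that a $(\lambda', \epsilon')$-quasigeodesic loop has length at most $\epsilon'$, this will immediately rule out the loops supplied by $(\star)$.

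First I would verify that, after discarding trivial loops, the lengths $\ell(\gamma_n)$ diverge. Concretely, for each non-constant loop $\gamma_n$ provided by $(\star)$ one must have $\ell(\gamma_n) \geq L_n$: otherwise $\gamma_n$ would be a single subpath of length less than $L_n$, hence a $(\lambda, 0)$-quasigeodesic, and applying the defining inequality to the endpoints $0$ and $\ell(\gamma_n)$ together with $\gamma_n(0) = \gamma_n(\ell(\gamma_n))$ would force $\ell(\gamma_n) \leq \lambda \cdot 0 = 0$. Passing to a subsequence, we may therefore assume $\ell(\gamma_n) \geq L_n \to \infty$.

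Now suppose for contradiction that $X$ is $\delta$-hyperbolic, and let $L_0, \lambda', \epsilon'$ be the constants produced by the local-to-global principle for the pair $(\delta, \lambda)$. For all $n$ with $L_n \geq L_0$, the loop $\gamma_n$ is an $L_0$-local $(\lambda, 0)$-quasigeodesic, hence a global $(\lambda', \epsilon')$-quasigeodesic. Applying the quasigeodesic inequality to the endpoints of the parametrising interval and using that $\gamma_n$ is a loop, we conclude $\ell(\gamma_n) \leq \epsilon'$, contradicting $\ell(\gamma_n) \to \infty$.

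I do not expect any real obstacle. The proposition is the "easy" half of Theorem~\ref{thm:main.technical}, with the local-to-global lemma doing all the work; the only mild subtlety is that the definition of an $L$-local quasigeodesic is vacuous on paths shorter than $L$, which is exactly what the first step addresses.
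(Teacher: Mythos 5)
Your proof is correct and follows essentially the same route as the paper's: contraposition via Gromov's local-to-global principle for quasigeodesics, applied to a loop $\gamma_n$ with $L_n$ at least the local-to-global threshold. Your preliminary observation that non-constant loops satisfy $\ell(\gamma_n)\geq L_n$ is a small refinement the paper elides (it simply asserts a loop cannot be a $(\lambda',\epsilon')$--quasigeodesic, which strictly speaking requires the loop to be longer than $\epsilon'$), but the substance of the argument is identical.
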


\begin{proof}
If $X$ is hyperbolic then it satisfies the local-to-global property for quasigeodesics: for every choice of $\lambda, \epsilon$ there exist constants $L = L(\lambda, \epsilon)$, $\lambda' = \lambda'(\lambda, \epsilon)$ and $\epsilon' = \epsilon'(\lambda, \epsilon)$ such that every $L$-locally $(\lambda, \epsilon)$--quasigeodesic is a global $(\lambda',\epsilon')$--quasigeodesic. 

Suppose $X$ satisfies the local-to-global property for quasigeodesics and $X$ satisfies \eqref{condition.star} with constants $(K, \lambda)$. Let $L = L(\lambda, 0)$ be the constant given by the local-to-global property. Choose $n \in \NN$ such that $L_n \geq L$. Then $\gamma_n$ is an $L$-locally $(\lambda, 0)$--quasigeodesic. However, $\gamma_n$ is a loop and so cannot be a $(\lambda', \epsilon')$--quasigeodesic for any choice of $\lambda',\epsilon'$. 
\end{proof}

\begin{prop}\label{thm:non-hyp_implies_star} 
If $X$ is a non-hyperbolic geodesic metric space, then there exists a constant $K \geq 1$ such that $X$ satisfies \eqref{condition.star} with constants $(K,3)$.
\end{prop}

\begin{proof}
Since $X$ is not hyperbolic, there exists an ultrafilter $\omega$ and a non-decreasing scaling sequence $\mu_n$ such that $\mathrm{Cone}_\omega(X, (\mu_n))$ is not a real tree. In particular, there exists a simple geodesic triangle $\Delta \subseteq \mathrm{Cone}_\omega(X, (\mu_n))$. Using Proposition~\ref{prop: Drutu Sapir}, up to replacing $\Delta$ with another simple geodesic triangle, we obtain that $\Delta = \lim_\omega (P_n)$, where each $P_n$ is a geodesic $k$-gon in $X$ for some $k$. Let $z_n^1, \dots z_n^k$ be the vertices of $P_n$, where the labels are taken respecting the cyclic order on $P_n$. From now on, we always consider the indices mod $k$.  Denote by $e_n^{i}$ the geodesic segment connecting $z_n^i, z_n^{i+1}$, that is the appropriate restriction of $P_n$. 

Consider the points $z_\omega^i = (z_n^i)\in \Delta$, and let $e_\omega^i = \lim_\omega(e_i)$. It is a standard argument to show that $e^i_\omega$ are geodesic segments whose endpoints are $z_\omega^i$, see for instance \cite[Lemma 10.48, Exercise 10.71]{DrutuKapovich}. Since $\lim_\omega(P_n) = \Delta$, we have $e^i_\omega \subseteq \Delta$.
Since there are only $k$ edges, for any $ \rho>1$,  $\omega$--almost surely we have \begin{equation}\label{proposition:star_length_edges}\ell(e_n^i)\leq \rho \mu_n \ell(e_\omega^i).\end{equation}  In particular, $\omega$--almost surely we have $\ell(P_n) \leq \rho \mu_n \ell(\Delta)$, that is to say that the length of the polygons $P_n$ is bounded above by a linear function of $\mu_n$. Our goal is to modify the polygons $P_n$ to obtain loops that are $(c\mu_n)$--locally $(3, 0)$--quasigeodesics for some $c > 0$, and whose lengths are comparable to those of the $P_n$. 

To this end, we restrict our attention to only some edges of $P_n$. We say that an index $1 \leq i \leq k$ is \emph{active} if $e_\omega^i \neq \{z_\omega^i\}$. Let $i_1 \leq \dots \leq i_d$ be the active indices. From now on, we will only consider edges with active indices, and thus we rename $e_\omega^{i_j}$ as $a_\omega^j$ and $e_n^{i_j}$ as $a_n^j$. Thus, $a_\omega^1, \dots, a_\omega^d$ is a subdivision of the triangle $\Delta$ into a geodesic $d$-gon. Since $\Delta$ is simple and its edges are compact, we have that there exists a $\delta > 0$ such that for all edges $a_\omega^{i}$ and points $x\in a_\omega^{i}$ we have 
\[\max \{d(x, a_\omega^{i-1}), d(x, a_\omega^{i+1})\} \geq \delta.\]

For any active edge $a_n^i$ and $x\in a_n^i$, $\omega$--almost surely we have \begin{equation}\label{proposition:star_distance_edges}\max \{d(x, a_n^{i-1}), d(x, a_n^{i+1})\} \geq \delta\rho^{-1} \mu_n.\end{equation}

Further, since the terminal vertex of $a_\omega^i$ and the initial vertex of $a_\omega^{i+1}$ coincide, 
$\omega$--almost surely we have \begin{equation}\label{anotherlabel}
d(a_n^i, a_n^{i+1}) \leq \frac{1}{2} \delta \rho^{-1} \mu_n
\end{equation}

The intuitive idea is now as follows. For infinitely many $n$ we have a collection of geodesic segments ($a_n^j$) whose length keeps increasing \eqref{proposition:star_length_edges} and such that we have some control on the distance between them \eqref{proposition:star_distance_edges},\eqref{anotherlabel}. Using this, we can connect the segments to obtain loops of controlled length which are locally quasigeodesics.

More formally, fix $n$ such that \eqref{proposition:star_length_edges}, \eqref{proposition:star_distance_edges}, \eqref{anotherlabel} are satisfied and orient the $a_n^j$ with the orientation of $P_n$ that agrees with the numbering. Let $L_n = \frac{1}{2}\delta\rho^{-1} \mu_n$. From now on, we will drop the subscript $n$ and denote, for instance $L_n = L$. Let $q^1$ be the first point of $a^1$ such that $d(q^1, a^2) \leq L$. 
By the continuity of the distance function and the choice of $q^1$ we see that $d(q^1,a^2)= L$.

Let $p^2$ be a point in $a^2$ such that $d(q^1,p^2)=L$. Therefore, we see that $d(p^2,a^3)\geq \delta\rho^{-1} \mu_n >L$. Let $q^2$ be the first point in $a^2$ after $p^2$ such that $d(q^2,a^3)\leq L$. Again, we have $d(q^2,a^3)= L$, and let $p^3\in a^3$ be a point such that $d(q^2, p^3)= L$. We iterate this procedure until we obtain a point $q^d \in a^d$ and a point $p^1\in a^1$ such that $d(q^d, p^1) = L$. 

We claim that $d(p^j, q^j)\geq L$. Indeed, since $d(p^j, a^{j-1})\leq L$, \eqref{proposition:star_distance_edges} implies $d(p^j, a^{j+1})\geq \delta\rho^{-1} \mu_n = 2L$, and the result follows from the triangle inequality.

From now on, we denote by $[p^j, q^j]$ the restriction of $a^j$ between $p^j, q^j$, and we choose once and for all geodesic segments $[q^j, p^{j+1}]$ connecting $q^j, p^{j+1}$. Let $\gamma_n = \gamma$ be the concatenation 
\[\gamma = [p^1, q^1] \ast [q^1, p^2] \ast \cdots \ast [q^d, p^1],\]

where we consider $\gamma$ to be parameterized by arc length.  We will show that \(\gamma\) is a \((L; 3, 0)\)--local quasigeodesic. 

Let \(x,y\) be two points of \(\gamma\) of parameterized distance less than \(L\). We denote by $d_\gamma(x,y)$ the parameter distance. We will prove that $d_\gamma(x,y)\leq 3 d(x,y)$. If $a$ and $b$ are contained in the same segment of \(\gamma\), then the inequality clearly holds. Thus, we can assume that \(x\) and \(y\) are on two consecutive segments of $\gamma$ since the length of each segment of $\gamma$ is at least $L$.

Firstly, consider the case $x\in [p^j, q^j]$, $y\in [q^j, p^{j+1}]$. If $x = q^j$, then we would be in the previous case, so $x\neq q^j$.  We claim $d(x,y)> d(q^j,y)$. If not, this would contradict the choice of $q^j$ as the first point at distance $L$ from $a^{j+1}$. Indeed,  $d(x,y)\leq d(q^j,y)$, implies $d(x,p^{j+1})\leq d(q^j,p^{j+1})$. Therefore, $d(x,y)> d(q^j,y)$. In particular: \[d_\gamma(x,y)=d(x,q^j)+d(q^j,y)\leq \bigl(d(x,y)+d(y,q^j)\bigr)+d(q^j,y)\leq 3 d(x,y).\]

Consider now the case $x\in [q^{j-1}, p^j]$ and $y\in [p^j, q^j]$. Since $d(q^{j-1},a^j)=d(q^{j-1},p^j)$, we have $d(x,y)\geq d(x,p^j)$. Hence \[d_\gamma(x,y)=d(x,p^j)+d(p^j,y)\leq d(x,p^j)+\bigl(d(p^j,x)+d(x,y)\bigr)\leq 3 d(x,y).\]

Thus, \(\gamma\) is a \((L;3,0)\)--local quasigeodesic, where $L= L_n = \frac{1}{2}\delta\rho^{-1} \mu_n$. To conclude the proposition, we need to bound the length of $\gamma$ linearly in terms of $\mu_n$. However, observe that $d(q^j, p^{j+1}) = L$ for all $j$, and $d(p^j, q^j) \leq \ell(a^j_n) \leq \rho \mu_n \ell (a^j_\infty)$. Setting $M= \max \ell(a^j_\infty)$, we obtain 
\[\ell(\gamma)\leq d\left(\frac{1}{2}\delta\rho^{-1} \mu_n + \rho M \mu_n\right) = d\left(\frac{1}{2}\delta\rho^{-1}  + \rho M\right) \mu_n.\qedhere\]
\end{proof}

\begin{duplicate}[\Cref{thm:main.technical}]
    A geodesic metric space $X$ is not hyperbolic if and only if there exists a sequence $L_n \rightarrow \infty$ and a sequence of non-constant $L_n$-locally $(3,0)$-quasigeodesic loops $\gamma_n$ that satisfy $\ell(\gamma_n) \leq K L_n$, where $K$ is some constant that does not depend on $n$. 
\end{duplicate}
\begin{proof}
One direction is given by \Cref{thm:non-hyp_implies_star} and the other by \Cref{prop:star_implies_non-hyp}.
\end{proof}

In \cite{HumeMackay:poorly}, by an \textit{$18$-bilipschitz embedded cyclic subgraph in $X$}, Hume and Mackay mean an injective graph homomorphism $\phi: C_n \rightarrow X$ of the circular graph $C_n$ into the graph $X$ such that $d_{C_n}(x,y) \leq 18 d_{X}(\phi(x),\phi(y))$ for all vertices $x,y \in C_n$. 
We now compare our results above to the following proposition of Hume and Mackay. 

\begin{prop}~\cite[Proposition~5.1]{HumeMackay:poorly}\label{prop.hm}
Let $X$ be a connected graph. $X$ is hyperbolic if and only if there is some $N$ such that every $18$-bilipschitz embedded cyclic subgraph in $X$ has length at most $N$. 
\end{prop}

From this we obtain a version of \Cref{thm:non-hyp_implies_star} with different constants.  Notably in Hume and Mackay's result we obtain a good multiplicative constant but a relatively large additive constant.  Whereas in our result, \Cref{thm:non-hyp_implies_star}, the size of the constants is reversed.

\begin{corollary} \label{cor.loops}
Let $K > 2$. If $G$ is a non-hyperbolic group, then for any finite generating set $S$, the Cayley graph $\Gamma(G,S)$ satisfies \eqref{condition.star} with constants $(K, 18)$. 
\end{corollary}

\begin{proof}
\Cref{prop.hm} tells us that there exists an increasing sequence of natural numbers $l_n \rightarrow \infty$ and a sequence of $18$-bilipschitz embedded cyclic subgraphs in $\Gamma(G,S)$ of length $l_n$. We may write these cyclic subgraphs as injective graph homomorphisms $\phi_n : C_{l_n} \rightarrow \Gamma(G,S)$. If $L_n = \lfloor l_n/2 \rfloor$ then any subpath in $C_{l_n}$ of length $L_n$ is a geodesic. So $C_{l_n}$ is $L_n$-locally $(1,0)$-quasigeodesic, from which it follows that its image in $\Gamma(G,S)$ is $L_n$-locally $(18,0)$-quasigeodesic. For $l_n$ sufficiently large we have $l_n \leq KL_n$, and by passing to a subsequence if necessary, we may assume that $L_n \rightarrow \infty$ is an increasing sequence. The result follows. 
\end{proof}

\subsection{Regularity}\label{sec.reg}
\begin{prop}\label{prop.condition.star}
Suppose $\Gamma(G,S)$ is a Cayley graph that satisfies \eqref{condition.star} with constants $(K,\lambda)$. Then for all $\lambda' > (2K - 1)\lambda$ the set of $(\lambda',0)$-quasigeodesics do not form a regular language.
\end{prop}

\begin{proof}
To prove the proposition we will show that any automata accepting the language of $(\lambda',0)$--quasigeodesics must have infinitely many distinct states.  In particular, the language is not accepted by an FSA and so is not regular.

Fix a generating set $S$ such that the Cayley graph $\Gamma(G,S)$ satisfies \eqref{condition.star} with constants $(K, \lambda)$. Let $\lambda' > (2K - 1)\lambda$. Write $l_n = \ell(\gamma_n)$. 

We need to choose the parametrisation of our loops $\gamma_n$ thoughtfully. 

\begin{claim*}
\emph{There exists an arclength parametrisation $\gamma_n: [0,l_n] \rightarrow \Gamma(G,S)$ of the loop $\gamma_n$ such that if $T_n \in \N$ is the minimal natural number such that $\gamma_n \eval_{[0,T_n]}$ is \textit{not} a $(\lambda',0)$-quasigeodesic, then $\gamma_n \eval_{[1,T_n]}$ is a $(\lambda',0)$-quasigeodesic.}
\end{claim*}

\begin{claimproof}[Proof of Claim]
To begin with, suppose $\gamma_n': [0,l_n] \rightarrow \Gamma(G,S)$ is some arbitrary parametrisation by arclength of the loop $\gamma_n$. Let $T_n' \in \N$ be the minimal natural number such that $\gamma_n' \eval_{[0,T_n']}$ is \textit{not} a $(\lambda',0)$-quasigeodesic. It follows that there exists a non-empty collection of non-negative integers 
\[\T_n' = \{T \in \N_0: T \leq T_n' \textrm{ and }\lambda' \absval{\gamma_n'(T)^{-1}\gamma_n'(T_n')} < T_n' - T\}\]
Let $S_n' = \max \T_n'$. Consider now the alternate parametrisation $\gamma_n: [0,l_n] \rightarrow \Gamma(G,S)$ of our loop defined by $\gamma_n(t) = \gamma_n'(t + S_n')$. It follows that $\gamma_n(0) = \gamma_n'(S_n')$ and $\gamma_n(T_n' - S_n') = \gamma_n'(T_n')$. If we define $T_n = T_n' - S_n'$ then 
\begin{itemize}
    \item $\gamma_n \eval_{[0,T_n - 1]}$ is a $(\lambda',0)$-quasigeodesic;
    \item $\gamma_n \eval_{[0,T_n]}$ is not a $(\lambda',0)$-quasigeodesic;
    \item $\gamma_n \eval_{[1,T_n]}$ is a $(\lambda',0)$-quasigeodesic;
\end{itemize}
and the claim follows. 
\end{claimproof}

Evidently, we may also assume that $\gamma_n(0) = e$ for all $n$. For each $n$, we fix the following notation:

\begin{itemize}
    \item Let $t_n$ be the maximal natural number such that $\gamma_n \eval_{[0,t_n]}$ is a $(\lambda,0)$--quasigeodesic;
    \item let $T_n$ be the minimal natural number such that $\gamma_n \eval_{[0,T_n]}$ is \textit{not} a $(\lambda',0)$--quasigeodesic;
    \item let $g_n \coloneqq \gamma_n(t_n)$;
    \item let $h_n \coloneqq \gamma_n(t_n)^{-1}\gamma_n(T_n)$. So $\gamma_n(T_n) = g_n h_n$. 
\end{itemize}

Now, let $m \in \NN$ be arbitrary and let $n$ be such that \begin{equation}L_n > \frac{2KL_m}{\kappa}.\label{eqn.defn.Ln}\end{equation}
where $\kappa$ is the positive constant
\begin{equation}
 \kappa \coloneqq \frac{1}{\lambda} - \frac{2K - 1}{\lambda'}. \label{eqn.defn.kappa}
\end{equation}

We will show that the $(\lambda',0)$--quasigeodesics $\gamma_m \eval_{[0,t_m]}$ and $\gamma_n \eval_{[0,t_n]}$ are in different states at times $t_m$ and $t_n$ respectively. 

Let $\eta: [0, t_m + T_n - t_n]$ denote the concatenation of $\gamma_m \eval_{[0,t_m]}$ with the path $\gamma_n \eval_{[t_n,T_n]}$.

Suppose first that $\eta \eval_{[0,t_m + T_n - t_n - 1]}$ is \textit{not} a $(\lambda',0)$-quasigeodesic. Then we are done since we know that $\gamma_n \eval_{[0,t_n]}$ concatenated with $\gamma_n \eval_{[t_n,T_n - 1]}$ \textit{is} a $(\lambda',0)$-quasigeodesic (by the minimality of $T_n$) whereas $\gamma_m \eval_{[0,t_m]}$ concatenated with the same path is \textit{not} a $(\lambda',0)$-quasigeodesic. So we may assume that $\eta \eval_{[0,t_m + T_n - t_n - 1]}$ is a $(\lambda',0)$-quasigeodesic. 

Suppose we have proven that $\eta \eval_{[0,t_m + T_n - t_n]}$ \textit{is} a $(\lambda',0)$-quasigeodesic. Then $\gamma_n \eval_{[0,t_n]}$ concatenated with the path $\gamma_n \eval_{[t_n, T_n]}$ \textit{is not} a $(\lambda',0)$--quasigeodesic (by the definition of $T_n$), but $\gamma_m \eval_{[0,t_m]}$ concatenated with the same path \textit{is} a $(\lambda',0)$--quasigeodesic. It follows that the $(\lambda',0)$--quasigeodesics $\gamma_m \eval_{[0,t_m]}$ and $\gamma_n \eval_{[0,t_n]}$ are in different states at times $t_m$ and $t_n$ respectively. So we would like to prove that $\eta \eval_{[0,t_m + T_n - t_n]}$ is a $(\lambda',0)$-quasigeodesic. Looking for a contradiction, suppose this is false. Then there exists some $0 \leq t \leq t_m + T_n - t_n$, $t \in \N_0$, such that 
\[\lambda' \absval{\eta(t)^{-1}\eta(t_m + T_n - t_n)} < t_m + T_n - t_n - t\]
If $t \geq t_m$, then $\gamma_n \eval_{[t,T_n]}$ is not a $(\lambda',0)$-quasigeodesic. However, by our assumption on the parametrisations of the loops, we know this is not the case. So we may assume that $t \leq t_m$. Hence, 
\begin{equation} \lambda' \absval{\gamma_m(t)^{-1} g_m h_n} < t_m + T_n - t_n - t \label{eqn.assumption}\end{equation}

To conclude the proof, we'll need to recall the following inequalities. By condition \eqref{condition.star}, we have:
\begin{align}
L_m &\leq t_m \leq KL_m;\label{eqn.fact1}\\
L_n &\leq t_n;\label{eqn.fact2}\\
T_n &\leq KL_n \leq Kt_n.\label{eqn.fact3}
\end{align}
Since the path $\gamma_m$ is simplicial, and since $\gamma_n\vert_{[0,t_n]}$ is a $(\lambda,0)$--quasigeodesic, we have:
\begin{align}
\absval{\gamma_m(t)^{-1} g_m} &\leq t_m - t;\label{eqn.fact4}\\
\frac{t_n}{\lambda} &\leq \absval{g_n}.\label{eqn.fact5}\end{align}
Finally, since $\gamma_n \eval_{[0,T_n - 1]}$ and $\gamma_n \eval_{[1,T_n]}$ are both $(\lambda',0)$-quasigeodesics yet $\gamma_n \eval_{[0,T_n]}$ is not a $(\lambda',0)$-quasigeodesic, we obtain:
\begin{align}
\absval{\gamma_n(T_n)} &< \frac{T_n}{\lambda'}.\label{eqn.fact6}
\end{align}

We have
$\absval{h_n} \geq \absval{g_n} - \absval{\gamma_n(T_n)}$,
so by \eqref{eqn.fact5} and \eqref{eqn.fact6} we see that
$\absval{h_n} \geq \frac{t_n}{\lambda} - \frac{T_n}{\lambda'}$.  It then follows from \eqref{eqn.fact3} that
\begin{equation}\absval{h_n}\geq \left(\frac{1}{\lambda} - \frac{K}{\lambda'}\right)t_n.\label{eqn.fact7}\end{equation}

Now, $\absval{\gamma_m(t)^{-1} g_m h_n} \geq \absval{h_n} - \absval{\gamma_m(t)^{-1} g_m}$, so by \eqref{eqn.fact7} and \eqref{eqn.fact4} we obtain
\begin{equation}
\absval{\gamma_m(t)^{-1} g_m h_n} \geq \left(\frac{1}{\lambda} - \frac{K}{\lambda'}\right)t_n  - (t_m - t).\label{eqn.step1}\end{equation}
Combining our assumption \eqref{eqn.assumption} with \eqref{eqn.fact3} we obtain
\begin{equation}\lambda'\absval{\gamma_m(t)^{-1} g_m h_n}\leq t_m - t + (K-1)t_n.\label{eqn.step2}\end{equation}

Next, combining \eqref{eqn.step1} and \eqref{eqn.step2} yields
\begin{align}
\frac{t_m - t + (K-1)t_n}{\lambda'} &\geq \left(\frac{1}{\lambda} - \frac{K}{\lambda'}\right)t_n  - (t_m - t).\nonumber
\intertext{This rearranges to}
0 &\geq \left(\frac{1}{\lambda} - \frac{(2K - 1)}{\lambda'}\right)t_n - \left(1 + \frac{1}{\lambda'}\right)(t_m-t);\nonumber\\
&\geq \kappa t_n - 2(t_m-t),\nonumber
\intertext{where $\kappa$ is defined in \eqref{eqn.defn.kappa}.  Now,}
 t_n &\leq \frac{2(t_m - t)}{\kappa} \leq \frac{2t_m}{\kappa},\nonumber\\
\intertext{and so by \eqref{eqn.fact1} and \eqref{eqn.fact2} we have}
 L_n &\leq \frac{2KL_m}{\kappa}\nonumber
\end{align}
which contradicts \eqref{eqn.defn.Ln}. So $\eta \eval_{[0,t_m + T_n - t_n]}$ is a $(\lambda',0)$-quasigeodesic and so the $(\lambda',0)$--quasigeodesics $\gamma_m \eval_{[0,t_m]}$ and $\gamma_n \eval_{[0,t_n]}$ are in different states at times $t_m$ and $t_n$ respectively.

Let $\xi: \NN \rightarrow \NN$ be the function 
\[ \xi(m) = \min \left\{ n \in \NN : L_n > \frac{2KL_m}{\kappa}\right\}.\]
Let $(n_i)_{i \in \N}$ be the integer sequence defined inductively by $n_1 = 1$, $n_{i+1} = \xi(n_i)$. By the above, we know that for every $i \in \N$ and for every $j < i$, $\gamma_{n_i}(t_{n_i})$ is in a different state to $\gamma_{n_j}(t_{n_j})$ as a $(\lambda',0)$-quasigeodesic. It follows that there are infinitely many different $(\lambda',0)$-states. Hence, the $(\lambda',0)$--quasigeodesics in $\Gamma(G,S)$ cannot form a regular language.
\end{proof}

\begin{duplicate}[\Cref{thm.converse}]
Let $G$ be a finitely generated non-hyperbolic group. Then for all finite generating sets $S$ and all $\lambda' > 54$ the set of $(\lambda',0)$-quasigeodesics is not regular in $\Gamma(G,S)$. 
\end{duplicate}
\begin{proof}
Since $G$ is not hyperbolic, by \Cref{thm:non-hyp_implies_star}, for each finitely generated Cayley graph $\Gamma$ of $G$, there exists some $K>1$ such that $\Gamma$ satisfies \eqref{condition.star} with constants $(K,3)$.  Instead, by \Cref{cor.loops}, we may take the constants $(K,18)$ with $K>2$ --- we choose to work with these constants.  Now, \Cref{prop.condition.star} implies that for all $\lambda'>(2\times 2-1)\times 18=54$, the set of $(\lambda',0)$-quasigeodesics is not regular in $\Gamma(G,S)$, as required.
\end{proof}

\begin{duplicate}[\Cref{thm.main}]
    A finitely generated group is hyperbolic if and only if it is $\QREG$.
\end{duplicate}
\begin{proof}
That hyperbolicity implies $\QREG$ was proven by Holt and Rees in \cite{HoltRees2003}. The other direction is given by our \Cref{thm.converse}.
\end{proof}

\Cref{thm.converse} suggests that for every non-hyperbolic group $G$ and every finite generating set $S$ there exists some infimal value of $\lambda$ such that for all $\lambda' > \lambda$ the language of $(\lambda',0)$-quasigeodesics in $\Gamma(G,S)$ is not regular. Given a non-hyperbolic group $G$, it is interesting to ask what this infimal $\lambda$ might be. As far as we are aware, there are no known examples of non-hyperbolic Cayley graphs $\Gamma(G,S)$ with regular $(\lambda,0)$-quasigeodesics unless $\lambda = 1$. 

\begin{conjecture}
If $G$ is a non-hyperbolic finitely generated group, then for all generating sets $S$, and for all $\lambda' > 1$, the $(\lambda',0)$-quasigeodesics in $\Gamma(G,S)$ do not form a regular language. 
\end{conjecture}

\subsection{Geodetic Cayley graphs}\label{sec.geod}

Finally, we prove our application to Shaprio's question on geodetic Cayley graphs.

\begin{duplicate}[\Cref{thm:UG_Hyperbolic}]
Let $G$ be a finitely generated group with a generating set $S$ such that $\Gamma(G, S)$ is geodetic.  If there exists $\lambda>3$ such that the language of $(\lambda, 0)$--quasigeodesics is regular, then $G$ is hyperbolic and hence virtually free.
\end{duplicate}

\begin{proof}
Let $\Gamma$ be a graph. An \emph{isometrically embedded circuit} (IEC) is a simplicial loop of length $2n+1$ such that the restriction of each subsegment of length at most $n$ is a geodesic. 

We claim that if $\Gamma$ is geodetic and not hyperbolic then there are IECs of arbitrarily large length. So suppose $\Gamma$ is geodetic and there is a uniform bound on the length of IECs. We will show that $\Gamma$ is hyperbolic. By \cite[Theorem~1.4]{Papasoglu1995b}, a Cayley graph is hyperbolic if and only if all geodesic bigons are uniformly thin, i.e. any two geodesics sharing endpoints have uniformly bounded Hausdoff distance. Consider an arbitrary geodesic bigon in $\Gamma$. Since $\Gamma$ is geodetic, if the geodesic endpoints are vertices, the geodesics need to coincide. Further, it is straightforward to check that if the endpoints are both in an edge one can reduce to a case where at least one endpoint is a vertex. So, the only case left is a bigon where one endpoint is a vertex and the other is the midpoint of an edge. By \cite[Lemma~4]{ElderPiggott2020}, such a configuration produces an IEC. Since these have uniformly bounded length, the bigon is thin. So $\Gamma$ is hyperbolic and the claim is proved.

Observe that an IEC of length $2n+1$ is an $n$--local geodesic. By Proposition~\ref{prop.condition.star}, we conclude that if a group is non-hyperbolic and geodetic, then for any $\lambda'>3$ the set of $(\lambda', 0)$--quasigeodesics cannot form a regular language. \end{proof}

\bibliographystyle{halpha}
\bibliography{main}

\end{document}